\newtheorem{theorem}{Theorem}
\newtheorem*{theorem*}{Theorem}
\newtheorem*{acknowledgement*}{Acknowledgement}
\newtheorem*{definition*}{Definition}
\newtheorem{corollary}[theorem]{Corollary}
\newtheorem{lemma}[theorem]{Lemma}
\newtheorem{remark}[theorem]{Remark}
\newcommand{\RR}[0]{\mathbb{R}}
\newcommand{\pd}[2]{\frac{\partial #1}{\partial#2}}
\newcommand{\pdt}[0]{\frac{\partial}{\partial t}}
\newcommand{\gt}[0]{\tilde{g}}
\newcommand{\delb}[0]{\overline{\nabla}}
\newcommand{\delt}[0]{\widetilde{\nabla}}
\newcommand{\Gamt}[0]{\widetilde{\Gamma}}
\newcommand{\Rc}[0]{\operatorname{Rc}}
\newcommand{\Rm}[0]{\operatorname{Rm}}
\newcommand{\Rmt}[0]{\widetilde{\operatorname{Rm}}}
\newcommand{\dfn}[0]{\doteqdot}
\newcommand{\CC}[0]{\mathbb{C}}
\newcommand{\Lc}[0]{\mathcal{L}}
\newcommand{\Ec}[0]{\mathcal{E}}
\newcommand{\Hc}[0]{\mathcal{H}}
\newcommand{\Bc}[0]{\mathcal{B}}
\newcommand{\Kc}[0]{\mathcal{K}}
\newcommand{\nabt}[0]{\tilde{\nabla}}
\newcommand{\Rt}[0]{\tilde{R}}
\newcommand{\Rct}[0]{\widetilde{\Rc}}
\newcommand{\Met}[0]{\operatorname{Met}}
\title[Persistence of bounded curvature under the Ricci flow]{Short-time persistence of bounded curvature under the Ricci flow}
\author{Brett Kotschwar}
\address{Arizona State University, Tempe, AZ, USA}
\email{kotschwar@asu.edu}
\begin{document}
\begin{abstract}
We use a first-order energy quantity to prove a strengthened statement of uniqueness
for the Ricci flow. One consequence of this statement is that if a complete solution on a noncompact manifold has uniformly
bounded Ricci curvature, then its sectional curvature will remain bounded for a short time if it is bounded initially.
In other words, the Weyl curvature tensor of a complete solution to the Ricci flow cannot become unbounded instantaneously  if the Ricci curvature remains bounded.  
\end{abstract}
\maketitle

\section{Introduction}  
Let $M$ be a smooth manifold of dimension $n$ and let $\Met(M, K)$ denote the set of Riemannian metrics $g$ on $M$ for which $(M, g)$
is complete and $\sup|\Rm(g)|_g \leq K$.  The standard statements of existence and uniqueness
for the Ricci flow
\begin{equation}\label{eq:rf}
 \pdt g = -2\Rc(g),
\end{equation}
which are a combination of results of Hamilton \cite{Hamilton3D}, DeTurck \cite{DeTurck}, Shi \cite{Shi}, and Chen-Zhu \cite{ChenZhu}, pertain
to initial metrics and solutions within these classes. They assert that, for
any $g_0\in \Met(M, K_0)$, there are positive constants $K$ and $T$, depending only on $n$ and $K_0$,
and a solution $g(t)$ to the Ricci flow on $M\times [0, T]$ such that $g(0) = g_0$ and $g(t) \in \Met(M, K)$ for each $t\in [0, T]$. 
Moreover, any other solution
$\gt(t)$ satisfying $\gt(0) = g_0$ and $\gt(t)\in \Met(M, \tilde{K})$ for some $\tilde{K}$ on  for $t\in [0, T]$ 
must agree identically with $g(t)$ on $[0, T]$.     

At present, there are essentially no stronger guarantees of short-time existence and uniqueness for \eqref{eq:rf}
which do not require additional geometric or dimensional restrictions.
One might reasonably hope for a more flexible theory of well-posedness, in which one could run the Ricci flow beginning at metrics 
that are incomplete, of unbounded curvature,
or are otherwise singular in some respect, and be assured that the solution would retain any isometries or other special features of the initial metric.
Such a theory would find many geometric applications, and, to this end, extensions to the standard statements of short-time existence and uniqueness
have already been explored in a variety of special cases.

Notably, in dimension two, a program of Topping and Giesen-Topping \cite{GiesenTopping1, GiesenTopping2, Topping1}, 
culminating in the paper \cite{Topping2},
has extended the classical results
of Hamilton \cite{Hamilton2D}, Chow \cite{Chow}, and Shi \cite{Shi} into an essentially complete theory of well-posedness for the Ricci flow
on Riemann surfaces. A consequence of their work is that \emph{any} initial surface (including those that are 
incomplete and without curvature bound) may be flowed in a unique way by a smooth, instantaneously complete, solution to \eqref{eq:rf} on a maximal time interval
of explicitly computable length.

It is unclear how much of Giesen-Topping's theory one can hope to extend to higher dimensions. When $n=2$, 
the Ricci flow is a conformal
flow, whose study can be reduced to that of a single scalar PDE, whereas,
for $n\geq 3$, it is a full system of quasilinear equations.  Furthermore, in every dimension greater than two,
there are complete Riemannian manifolds which are heuristically expected not to admit \emph{any} short-time solution 
to \eqref{eq:rf}; one can imagine, e.g., a noncompact Riemannian manifold with 
an infinite sequence of round cylindrical necks of arbitrarily small caliber (see, e.g., pp. 1-2 of \cite{CabezasRivasWilking} or
p. 11 of \cite{ToppingICM}). 

There are nevertheless a number of existence results for categories of initial metrics not covered by the classical theory. For example, Cabezas-Rivas and Wilking
\cite{CabezasRivasWilking}, building on work of Simon \cite{Simon2, Simon3}, have constructed smooth solutions
to the Ricci flow beginning from arbitrary complete metrics of nonnegative complex (possibly unbounded) sectional curvature.
In related work, Chau, Li, and Tam \cite{ChauLiTam1, ChauLiTam2}, 
and Yang and Zheng \cite{YangZheng} have constructed solutions to the K\"ahler-Ricci flow beginning at metrics of potentially unbounded curvature. Other
recent work includes that of Koch and Lamm \cite{KochLamm} (see also \cite{SchnuererSchulzeSimon} and \cite{Simon1}), who have proven the existence of solutions to the Ricci flow starting at initial data of low-regularity that are
perturbations of the Euclidean metric on $\RR^n$,
Schulze-Simon \cite{SchulzeSimon} and Deruelle \cite{Deruelle} (see also \cite{FeldmanIlmanenKnopf}), who
 have constructed smooth expanding self-similar solutions emerging from conical initial data, and Xu \cite{Xu}, who has proven the short-time existence
of solutions under some integral bounds on the curvature.  

Comparatively less seems to be known about the \emph{uniqueness} of such nonclassical solutions in higher dimensions. To our knowledge, in fact, it is an open question,
except in some special cases, whether (or to what degree) \emph{any} of the constructions referenced in the previous paragraph are unique. There are simple counterexamples 
which demonstrate that
some restriction on the completeness of a solution is necessary to ensure its uniqueness, but it is unclear
what (if any) additional restrictions are actually required. There are a few recent results in this direction, however. 
For example, the local estimates obtained by Chen  in \cite{ChenStrongUniqueness} imply that any two smooth complete solutions on
a three-dimensional manifold which coincide initially at a complete metric of bounded, nonnegative sectional curvature must agree identically (see also \cite{ChenXuZhang}).
Sheng and Wang \cite{ShengWang} have studied complete solutions to the Ricci flow satisfying a lower bound on the complex sectional curvature 
and have proven uniqueness within this class under some additional assumptions on the initial metric. Also, Huang and Tam \cite{HuangTam} have recently obtained 
results concerning the preservation of the K\"ahler structure for solutions beginning at K\"ahler metrics of potentially unbounded curvature, and Chodosh \cite{ChodoshExpanders},
Chodosh-Fong \cite{ChodoshFong}, and Deruelle \cite{DeruelleExpanderUC} have obtained uniqueness results for various classes of
asymptotically conical expanding self-similar solutions.

\subsection{Methods of proof} The uniqueness of complete solutions to the Ricci flow of uniformly bounded curvature 
was proven separately for compact and noncompact $M$.
The statement for compact $M$ was first obtained by Hamilton \cite{Hamilton3D} as a byproduct of the inverse function theorem-based argument
he employed to prove the short-time existence of solutions. Later, in \cite{HamiltonSingularities}, Hamilton gave a simpler proof exploiting a connection
between DeTurck's trick and the harmonic map heat flow. The first proof for noncompact $M$ was given much later by Chen and Zhu \cite{ChenZhu}. 
Their argument is also based on Hamilton's variation on DeTurck's trick, but its implementation in the noncompact setting requires them to overcome a number
of novel technical obstacles.

In \cite{KotschwarRFUniqueness}, we gave an alternative proof of the uniqueness which
circumvents the gauge-related degeneracy of \eqref{eq:rf} without an appeal to DeTurck's trick. There, the problem is instead first reframed
as one for an prolonged system composed of the difference of the solutions, their connections,
and their curvature tensors. This leads to a closed system of differential
inequalities to which a direct $L^2$-energy argument modeled on that for linear parabolic equations can be applied. In a nutshell, the argument is this: given two smooth 
complete solutions $g(t)$ and $\gt(t)$ of bounded curvature on $M\times [0, T]$ with $g(0) = \gt(0)$ 
one can select a weight function $\Phi = \Phi(x, t)$ such that the quantity
\begin{equation}\label{eq:olde}
 \Ec(t) \dfn \int_M\left(t^{-1}|g- \gt|_{\gt}^2 + t^{-1/2}|\Gamma - \Gamt|^2_{\gt} + |\Rm(g) - \Rm(\gt)|^2_{\gt}\right)\Phi\,d\mu_{\gt}
\end{equation}
is well-defined and differentiable on $(0, T]$. A straightforward calculation then shows that $\frac{d}{dt}\Ec(t) \leq C \Ec(t)$ for some $C$
and Gronwall's inequality implies that $\Ec(t) \equiv 0$. With some adjustments, the argument can be extended 
to solutions with some mild growth of curvature in space and some blow-up in time as $t\searrow 0$. This approach
has also found some subsequent application to the uniqueness of other geometric PDE whose degeneracy arises, as in the case of the Ricci flow,
 from the diffeomorphism invariance of the equation (see, e.g.,
\cite{BedulliHeVezzoni}, \cite{Bell}, \cite{Hilaire}, \cite{KotschwarKCFUniqueness}, \cite{LotayWei}).

\subsection{A first-order energy quantity}
One limitation of the approach in \cite{KotschwarRFUniqueness} is that the energy quantity $\Ec(t)$
in \eqref{eq:olde} involves the curvature tensors of the solutions, and factors of both $|\Rm(g)|$ and $|\Rm(\gt)|$ 
crop up in the various error terms in the differential inequality satisfied by 
$\Ec(t)$.
In this paper, we instead apply the strategy in \cite{KotschwarRFUniqueness} to a alternative system encoding a \emph{first-order} prolongation
of the difference of metrics. In place of \eqref{eq:olde}, for each $r > 0$,  we consider a localized energy quantity of the form
\begin{equation}\label{eq:newe}
  \Ec_r(t) \dfn \int_M\left(t^{-(1+\sigma)}|g-\gt|^2_{\gt} +at^{-\sigma}|B|^2_{\gt}\right)\theta_r\Phi\,d\mu_{\gt},
\end{equation}
where $B_k = g_{km}g^{ij}(\Gamma_{ij}^m - \Gamt_{ij}^m)$ is the \emph{Bianchi one-form} (see Section \ref{sec:prelim} below),
$\sigma$ and $a$ are positive constants, $\Phi$ is a rapidly decaying weight function as above, and $\theta_r$ is family of cutoff functions whose supports exhaust $M$ as $r\to \infty$. 
For an appropriate class of solutions, a calculation similar to that in Section 4 of \cite{KotschwarRFUniqueness} implies 
 a uniform bound in $t$ of the form $\Ec_r(t) \leq \epsilon(r)$ where $\lim_{r\to\infty}\epsilon(r) =  0$. 
The novel feature of $\Ec_r(t)$ is that its evolution equation involves the full curvature tensor of only \emph{one} of the solutions.  Moreover, the coefficients in the error
terms that arise depend at most linearly on (any contractions of) the derivatives of curvatures of either of the solutions. In addition to simplifying the proof
of the standard uniqueness theorem for the Ricci flow, these features make possible the following further generalization.

\begin{theorem}\label{thm:uniqueness} Suppose that $g(t)$ and $\gt(t)$ are smooth, complete solutions to the Ricci flow on $M\times [0, T]$ 
satisfying
\begin{equation}\label{eq:curvgrowth}
   \sup_{M\times [0, T]} t^{1-\sigma}|\Rc(g(t))|_{g(t)} \leq K, \quad \sup_{M\times[0, T]}t^{1-\sigma}|\Rm(\gt(t))|_{\gt(t)} \leq K,
\end{equation}
for some $K > 0$ and $\sigma \in (0, 1)$.  If $g(0) = \gt(0)$, then $g(t) = \gt(t)$ for all $t\in [0, T]$.
\end{theorem}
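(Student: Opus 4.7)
The plan is to adapt the weighted $L^2$-energy framework of \cite{KotschwarRFUniqueness} to the first-order quantity $\Ec_r(t)$ defined in \eqref{eq:newe}. Set $h \dfn g - \gt$ and $A_{ij}^k \dfn \Gamma_{ij}^k - \Gamt_{ij}^k$, and let $B$ denote the Bianchi one-form from the introduction. Since $g(0) = \gt(0)$, the quantities $h$, $A$, and $B$ all vanish at $t = 0$. I would take $\gt$ as the background metric throughout, so that all norms and covariant derivatives are computed with respect to it. The aim is to establish $\Ec_r(t) \leq \epsilon(r)$ with $\epsilon(r) \to 0$ as $r \to \infty$ on a small interval $(0, T_1]$, so that $h \equiv 0$ on $[0, T_1]$; the argument may then be iterated a finite number of times to cover $[0, T]$.

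The first task is to derive pointwise heat-type differential inequalities for $|h|^2_{\gt}$ and $|B|^2_{\gt}$ in which no full curvature tensor of $g$ appears. Expanding $\pdt h = -2(\Rc(g) - \Rc(\gt))$ in $\gt$-derivatives of $h$ and $A$ yields a Lichnerowicz-type linearization whose second-order part is $\Delta_{\gt} h$ and whose lower-order coefficients involve only $\Rm(\gt)$, $\Rc(g)$, and $\Rc(\gt)$; this produces schematically
\begin{equation*}
(\dt - \Delta_{\gt})|h|^2 \leq -|\nabt h|^2 + C(|\Rm(\gt)| + |\Rc(g)|)|h|^2 + C|A|\cdot|h|\cdot(1+|A|).
\end{equation*}
For $B$, a more delicate calculation, in which the contracted second Bianchi identity forces the would-be non-parabolic second-order contributions to $\pdt A$ to align into $\Delta_{\gt} B$, gives
\begin{equation*}
(\dt - \Delta_{\gt})|B|^2 \leq -|\nabt B|^2 + C|\Rm(\gt)|\cdot(|h|^2 + |B|^2) + C(|A|^2 + |B|\cdot|\nabt h|)(1+|h|).
\end{equation*}
These are the critical identities: by prolonging to $(h, B)$ instead of $(h, A, \Rm(g)-\Rm(\gt))$ as in \cite{KotschwarRFUniqueness}, the full curvature tensor $\Rm(g)$ is eliminated from the coefficients, and only $|\Rc(g)|$ and $|\Rm(\gt)|$ remain, precisely the quantities controlled by \eqref{eq:curvgrowth}.

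Next I would construct the weight $\Phi$ as in \cite{KotschwarRFUniqueness}, decaying rapidly in space relative to $\gt$-distance and satisfying $(\dt + \Delta_{\gt})\Phi \leq 0$, and take $\theta_r$ to be a family of $\gt$-distance cutoffs whose supports exhaust $M$. Differentiating $\Ec_r$ in $t$, integrating by parts to absorb Laplacian contributions into the favorable $-|\nabt h|^2$ and $-|\nabt B|^2$ terms from the inequalities above, and collecting terms yields negative ``price'' terms of size $(1+\sigma)t^{-(2+\sigma)}|h|^2$ and $\sigma a t^{-(1+\sigma)}|B|^2$ from differentiating the weights. The dangerous cross terms take the form $t^{-(1+\sigma)}|\Rc(g)||h|^2$ and $t^{-\sigma}|\Rm(\gt)||B|^2$; hypothesis \eqref{eq:curvgrowth} supplies a factor of at most $Kt^{-(1-\sigma)}$ to each curvature, giving overall powers $t^{-2}$ and $t^{-1}$, which are dominated by the price powers $t^{-(2+\sigma)}$ and $t^{-(1+\sigma)}$ for small $t$. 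Choosing $a$ large to handle the cross-coupling between the $|h|^2$ and $|B|^2$ pieces, and $T_1$ small, one obtains
\begin{equation*}
\frac{d}{dt}\Ec_r(t) \leq C \Ec_r(t) + F_r(t)
\end{equation*}
on $(0, T_1]$, where $F_r(t)$ is supported on the annulus $\{\nabla \theta_r \neq 0\}$ and tends to zero as $r \to \infty$. Gronwall's inequality, combined with $\Ec_r(t) \to 0$ as $t \to 0^+$, gives $\Ec_r(t) \leq \epsilon(r) \to 0$, and monotone convergence yields $h \equiv B \equiv 0$ on $[0, T_1]$.

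The principal obstacle is verifying the initial behavior $\Ec_r(t) \to 0$ as $t \to 0^+$. Since \eqref{eq:curvgrowth} permits $|\Rc(g)|$ and $|\Rm(\gt)|$ to blow up as $t \to 0^+$, one cannot directly obtain uniform $C^1$-bounds on $h$ at $t = 0$ via Shi-type estimates as in the standard theory. Instead, one must exploit the smoothness of $g$ and $\gt$ on $(0, T]$ together with the vanishing $h(0) = 0$ in a quantitative way, showing that $|h|^2 = o(t^{1+\sigma})$ and $|B|^2 = o(t^{\sigma})$ as $t \to 0^+$, uniformly on the support of $\theta_r \Phi$. This is precisely where the restriction $\sigma \in (0, 1)$ and the specific scaling of the $t$-weights in $\Ec_r$ are essential, and where the argument departs most substantively from \cite{KotschwarRFUniqueness}.
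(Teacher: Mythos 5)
Your overall strategy matches the paper's: prolong to the first-order system $(h, B)$, form a weighted localized $L^2$ energy combining $t^{-(1+\sigma)}|h|^2$ and $t^{-\sigma}|B|^2$, integrate by parts so that the $\Lc(h)$ and $\delta^*_{\gt}B$ contributions produce a usable dissipation term for $\nabt h$, and close with a Gronwall-type argument after choosing the weight $\eta$ to dominate the cutoff errors. However, the claimed evolution inequality for $B$ is wrong in a way that undermines a step you rely on, and it misses the actual structural point of the first-order prolongation.

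You assert that, after using the contracted Bianchi identity, $B$ satisfies a heat-type inequality
\[
(\dt - \Delta_{\gt})|B|^2 \leq -|\nabt B|^2 + \cdots ,
\]
with the highest-order terms ``aligning into $\Delta_{\gt}B$.'' This is not what happens, and it is not derivable from the hypotheses. The computation in the paper's Lemma giving \eqref{eq:bev} shows that the contracted second Bianchi identities $\operatorname{Bian}(g,\nabla,\Rc)=0$ and $\operatorname{Bian}(\gt,\nabt,\Rct)=0$ cause the would-be second-order contributions to \emph{cancel entirely}, not regroup into a Laplacian. The upshot is that $\pdt B$ is zeroth order in $B$ and first order in $h$ (together with a term carrying $\nabt\Rct$): there is no $\Delta_{\gt}B$, and hence no favorable $-|\nabt B|^2$. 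If one tried to force your inequality from the true identity, one would need to control $\langle B,\Delta_{\gt}B\rangle$, i.e.\ third derivatives of $h$, which are not available. Fortunately the pure ODE structure is actually \emph{better} for the argument: when you differentiate $t^{-\sigma}\Bc_r(t)$, no integration by parts is needed for the $B$-piece, so no boundary or weight-gradient errors are generated from that term; the only dissipation one needs (to absorb $|\nabt h|^2$-type errors) already comes from the $\Lc(h)$ term in the $h$-equation, via the lower bound $g^{ij}\langle\nabt_i h,\nabt_j h\rangle\ge\alpha_0|\nabt h|^2$. So your plan can be repaired by replacing the parabolic $B$-inequality with the correct ODE inequality $\pdt|B|^2 \leq C(|\Rct|+|\Rc|)|B|^2 + C|\nabt\Rct||h||B| + C|\Rc||\nabt h||B|$ and dropping the attempted $\Delta_{\gt}|B|^2$ integration by parts. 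This ODE-for-$B$, PDE-for-$h$ split is precisely what the paper isolates in its reformulation in Section~\ref{ssec:pdeode}, and is the reason the full curvature of $g$ (and all derivatives of curvature of $g$) never enter the error terms.

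A second, smaller issue: you describe the verification of $\Ec_r(t)\to 0$ as $t\to 0^+$ as ``the principal obstacle'' and suggest that the permitted curvature blow-up makes this delicate. In fact the hypotheses grant that $g$ and $\gt$ are smooth on all of $M\times[0,T]$ (the bound \eqref{eq:curvgrowth} merely fails to be \emph{uniform} in space as $t\to 0$). Since $g(0)=\gt(0)$, an induction in time-derivatives shows that every $\partial_t^k h$ vanishes at $t=0$, so on the compact support of $\theta_r$ one has $|h|+|B| = O(t^b)$ for every $b>0$. The initial condition $\lim_{t_0\to 0}\Ec_r(t_0)=0$ is therefore immediate, as the paper notes in a single line, and is not where the restriction $\sigma\in(0,1)$ enters; that restriction is used to ensure the price terms $t^{-(2+\sigma)}|h|^2$ and $t^{-(1+\sigma)}|B|^2$ dominate the curvature-weighted error terms of respective orders $t^{-2}$ and $t^{-1}$ for small $t$.
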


The rate of potential blow-up in $t$ permitted here is an improvement over that in Corollary 3 in \cite{KotschwarRFUniqueness},
in which the curvature tensors
of both solutions were assumed to satisfy bounds of the form $K/t^{1-\sigma}$ for $\sigma\in(1/2, 1)$. 
(We do not treat the case here, however, as in Corollary 4 of that reference, in which
 the curvature tensors of the solutions are permitted to have quadratic growth on each time-slice.) Note that, by the argument of \cite{Hamilton3D}
 (see, e.g., Lemma 6.49  of \cite{ChowKnopf}),
our assumptions on the speeds of $g(t)$ and $\gt(t)$ imply that these metrics remain uniformly equivalent for all $t\in [0, T]$. It would be very interesting to know what can be said about solutions 
whose curvature tensors satisfy a uniform bound of the form $K/t$.

Together
with the short-time existence theorem of Shi \cite{Shi}, Theorem \ref{thm:uniqueness} implies that if the Weyl tensor of any complete solution to the Ricci flow
is bounded initially, it cannot \emph{immediately} become unbounded if the Ricci tensor remains bounded.
\begin{corollary}\label{cor:bdrc}
 Suppose that  $g(t)$ is a solution to the Ricci flow on $M\times [0, T]$ for some $T > 0$. If $g(0) \in \Met(M, K_0)$ and
\[
\sup_{M\times [0, T]}t^{1-\sigma}|\Rc(g(t))|_{g(t)} < \infty,
\]
for some $\sigma \in (0, 1)$, there exists $T^{\prime} =T^{\prime}(n, K_0, T) > 0$ such that 
\[
\sup_{M\times[0, T^{\prime}]}|\Rm(g(t))|_{g(t)} < \infty.
\] 
\end{corollary}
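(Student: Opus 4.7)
The plan is to combine Shi's short-time existence theorem with Theorem \ref{thm:uniqueness} in the most direct way. Since $g(0) \in \Met(M, K_0)$, Shi's theorem supplies constants $K_1 = K_1(n, K_0)$ and $T_1 = T_1(n, K_0) > 0$ together with a smooth complete solution $\gt(t)$ to \eqref{eq:rf} on $M\times [0, T_1]$ such that $\gt(0) = g(0)$ and $\gt(t) \in \Met(M, K_1)$ for every $t \in [0, T_1]$. I would then set $T' \dfn \min\{T, T_1\}$ and compare $g(t)$ with $\gt(t)$ on $M\times [0, T']$.

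Next I would verify the hypotheses of Theorem \ref{thm:uniqueness} on this interval. For $\gt(t)$, the bound $|\Rm(\gt(t))|_{\gt(t)} \leq K_1$ trivially implies $t^{1-\sigma}|\Rm(\gt(t))|_{\gt(t)} \leq (T')^{1-\sigma} K_1$ on $M \times [0, T']$. For $g(t)$, the bound $t^{1-\sigma}|\Rc(g(t))|_{g(t)} \leq K_2$ is given by hypothesis (with $K_2$ the finite supremum from the corollary's assumption). Taking $K \dfn \max\{(T')^{1-\sigma}K_1, K_2\}$, both curvature assumptions \eqref{eq:curvgrowth} hold. The only nontrivial point is the completeness of $g(t)$ for $t > 0$: this follows from the remark after Theorem \ref{thm:uniqueness}, since the Ricci bound on $g(t)$ integrates to give $g(t)$ uniformly equivalent to the complete metric $g(0)$, hence complete.

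With all hypotheses verified, Theorem \ref{thm:uniqueness} yields $g(t) \equiv \gt(t)$ on $M \times [0, T']$, and so
\[
\sup_{M\times [0, T']} |\Rm(g(t))|_{g(t)} = \sup_{M\times [0, T']} |\Rm(\gt(t))|_{\gt(t)} \leq K_1 < \infty,
\]
which is exactly the claim, with $T' = T'(n, K_0, T)$ as required.

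The only substantive issue in this argument is the verification that $g(t)$ is complete for each $t \in (0, T']$ so that Theorem \ref{thm:uniqueness} applies as stated; however, this is handled by the quoted consequence of the Ricci bound and is not a genuine obstacle. Thus the corollary is essentially an immediate packaging of Shi's existence theorem together with the uniqueness statement of Theorem \ref{thm:uniqueness}, the substantive content residing entirely in the latter.
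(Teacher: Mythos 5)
Your proof is correct and follows essentially the same route as the paper: invoke Shi's short-time existence theorem to produce a bounded-curvature companion solution $\gt(t)$ with the same initial data, check that the hypotheses \eqref{eq:curvgrowth} of Theorem \ref{thm:uniqueness} hold on $[0, \min\{T, T_1\}]$ (including completeness of $g(t)$ via the remark that the Ricci bound forces uniform equivalence with $g(0)$), and conclude $g \equiv \gt$ there, so $g(t)$ inherits $\gt$'s curvature bound. The only cosmetic difference is that you spell out the verification of the curvature hypotheses more explicitly than the paper does, which is fine.
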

\begin{proof}
 According to Shi's existence theorem \cite{Shi}, there exist positive numbers $\tilde{K} = \tilde{K}(n, K_0)$ and $\tilde{T} = \tilde{T}(n, K_0)$ and a solution $\gt(t)$ 
to \eqref{eq:rf} on $[0, \tilde{T}]$ for which $\gt(0) = g(0)$ and $\gt(t) \in \Met(M, \tilde{K})$ for each $t$. 
 As observed above, since $\sigma > 0$, the uniform bound on $t^{1-\sigma}|\Rc(g(t))|_{g(t)}$ implies
that each $g(t)$ is uniformly equivalent to $g(0)$, and is therefore also complete. We may then apply Theorem \ref{thm:uniqueness} to conclude that
 $g(t) = \gt(t)$ on $M\times [0, \min\{T, \tilde{T}\}]$, and hence that $g(t)\in \Met(M, \tilde{K})$ at least for $0 \leq t \leq T^{\prime} = \min\{T, \tilde{T}\}$. 
\end{proof}

Note that the above corollary concerns only the behavior of the solution near $t=0$.
In particular, we do not assert here that, if $g(0)\in \Met(M, K_0)$ for some $K_0$ 
and $\sup_{M\times [0, T)}|\Rc(g(t))|_{g(t)}$ is finite,
then $\sup_{M\times[0, T)}|\Rm(g(t))|_{g(t)}$ must also be finite for arbitrary $T > 0$. When $M$ is compact, this is a result of \v{S}e\v{s}um \cite{Sesum}; the assumption
on the Ricci curvature has since been weakened by a number of authors.  
When $M$ is noncompact, this statement is Theorem 1.4 in the paper \cite{MaCheng} of Ma and Cheng. 
The proof provided there (which is based on a blow-up argument), however, makes use of an
 implicit assumption that the curvature tensor remains bounded for a short time \cite{ChengPersonalCommunication}.

Finally, we note that Cabezas-Rivas and Wilking \cite{CabezasRivasWilking}  
and Giesen and Topping \cite{GiesenTopping2, GiesenTopping3} have constructed examples which demonstrate that,
when $M$ is noncompact, the maximal time of existence of a smooth complete solution $g(t)$ on $M$ may strictly exceed the smallest $T > 0$ 
such that $\sup_{M\times [0, T)}|\Rm(g(t))| = \infty$.

\section{Preliminaries}\label{sec:prelim}
Let $g$ and $\gt$ be Riemannian metrics on $M$, and $\nabla$ and $\delt$ their Levi-Civita connections.
 Define $h \dfn g- \gt$ and let $A\in C^{\infty}(T^{1}_2(M))$ be the smooth $(2, 1)$-tensor field satisfying $\nabla = \delt + A$. In local coordinates,
\begin{equation}\label{eq:a}
    A_{ij}^k = \Gamma^{k}_{ij} -\Gamt_{ij}^k = \frac{1}{2}g^{mk}\left(\delt_i g_{jm} + \delt_j g_{im} - \delt_m g_{ij}\right).
\end{equation}
Let $S_2(T^*M)$ denote the bundle of smooth symmetric $(2, 0)$-tensors on $M$, and let
\[
\delta_{\gt}: C^{\infty}(S_2(T^*M)) \to C^{\infty}(T^*M), \quad \delta_{\gt}^*:C^{\infty}(T^*M)\to C^{\infty}(S_2(T^*M)),
\]
denote, respectively, the divergence operator associated to $\gt$ and its formal $L^2(d\mu_{\gt})$-adjoint.
Explicitly,
\[
\delta_{\gt}(V)_k \dfn -\gt^{ij}\nabt_i V_{jk}, \quad \delta^*_{\gt}(W)_{ij} \dfn \frac{1}{2}\left(\nabt_i W_j + \nabt_j W_i\right).
\]
Further, define the operator $\Lc = \Lc_{g, \nabt}: C^{\infty}(S_2(T^*M))\to C^{\infty}(S_2(T^*M))$ by
\begin{align*}
    \Lc(V)_{ij} \dfn \nabt_p(g^{pq}\nabt_q V_{ij}) = g^{pq}\nabt_p\nabt_q V_{ij} - g^{pr}g^{qs}\nabt_{p}g_{rs}\nabt_q V_{ij}.
\end{align*}
\begin{remark} Here and elsewhere, $g^{ij}$ and $\gt^{ij}$ denote the components of the metrics induced by $g$ and $\gt$ on $T^*M$, i.e., the components
of the matrices satisfying $g^{ik}g_{kj} = \gt^{ik}\gt_{kj} = \delta_j^i$. Since we will be considering the metrics $g$ and $\gt$ simultaneously, 
 we will always raise and lower the indices of the components of other tensors explicitly to avoid confusion. We will use the notation $\Rm$ and $\Rmt$
for the $(3, 1)$-curvature tensors of $g$ and $\gt$, and write $\Rc$ and $\Rct$ for their Ricci tensors.
\end{remark}

For a metric $\hat{g}$, a connection $D$, and a smooth section $V \in C^{\infty}(S_2(T^*M))$, we define (following Hamilton and DeTurck)
the expression
\[
\operatorname{Bian}(\hat{g}, D, V)_k \dfn \hat{g}^{ij}\left(D_{i}V_{jk} 
- \frac{1}{2}D_{k}V_{ij}\right),
\]
where $\hat{g}^{ij} = (\hat{g}^{-1})^{ij}$. Note that
\[
    \operatorname{Bian}(g, \nabla, g) = \operatorname{Bian}(\gt, \nabt, \gt) = \operatorname{Bian}(g, \nabla, \Rc) = 
\operatorname{Bian}(\gt, \nabt, \Rct) = 0,
\]
the latter two identities being simply the contracted second Bianchi identities.
In the calculations below, we will single out the specific choice
\[
    B_k \dfn \operatorname{Bian}(g, \nabt, g)_k \dfn g_{pk}g^{ij}A^{p}_{ij} = g^{ij}\left(\nabt_{i}g_{jk} 
- \frac{1}{2}\nabt_{k}g_{ij}\right),
\]
which we will call the \emph{Bianchi one-form} of $g$ and $\gt$. As $\operatorname{Bian}(g, \nabt, g) = \operatorname{Bian}(g, \nabt, h)$, 
we may also regard $B$ as the application of a first-order operator to $h$.

We now use these operators to put the difference of the Ricci tensors of the metrics $g$ and $\gt$ 
into a convenient form.  Here and below, for given tensor fields $V$ and $W$, the notation
$V\ast W$ will represent some weighted sum of contractions of $V\otimes W$ with respect to the metric $\gt$
with coefficients bounded by universal constants.  The following computation is standard.

\begin{lemma}\label{lem:rcdiff}
The difference of the Ricci tensors of $g$ and $\gt$ satisfies
\begin{align}
\begin{split}\label{eq:rcdiff}
    -2(\Rc - \Rct) &= \Lc(h)  - 2\delta^*_{\gt}B + g^{-1}\ast g^{-1}\ast \nabt h \ast \nabt h
+ g^{-1}\ast \Rmt \ast h.
\end{split}
\end{align}
\end{lemma}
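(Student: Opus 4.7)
The natural strategy is to follow the usual route from a change-of-connection formula to the Ricci tensor, and then to massage the first-order terms into the specific form involving $\Lc(h)$, $\delta^*_{\gt} B$, and admissible error pieces. First I would use the fact that $\nabla$ is the Levi-Civita connection of $g$ and $\nabt \gt = 0$. Since $\nabt h = \nabt g$, the standard Koszul-type computation gives the explicit formula
\[
A^p_{ij} = \frac{1}{2} g^{pq}\left(\nabt_i h_{jq} + \nabt_j h_{iq} - \nabt_q h_{ij}\right),
\]
which is already recorded in \eqref{eq:a}. This identity is the crucial bridge between the tensor $A$ and the quantity $\nabt h$ that actually appears in the desired formula.

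Next, I would invoke the classical identity relating the curvature tensors of two connections that differ by a tensor $A$, namely
\[
R^\ell_{\,ijk} - \widetilde R^\ell_{\,ijk} = \nabt_i A^\ell_{jk} - \nabt_j A^\ell_{ik} + A^\ell_{ip}A^p_{jk} - A^\ell_{jp}A^p_{ik}.
\]
Contracting $\ell$ with $i$ yields the difference of Ricci tensors in the form
\[
R_{jk} - \widetilde R_{jk} = \nabt_i A^i_{jk} - \nabt_j A^i_{ik} + A \ast A.
\]
The zeroth-order quadratic term $A\ast A$ is manifestly of type $g^{-1}\ast g^{-1}\ast \nabt h \ast \nabt h$ after substituting the formula for $A$.

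The heart of the argument is then to expand the two divergence-like terms above using the formula for $A$, sorting the resulting second derivatives into three categories. The term $-\frac{1}{2}g^{ip}\nabt_i\nabt_p h_{jk}$ produces $-\frac{1}{2}\Lc(h)_{jk}$ up to a correction of type $g^{-1}\ast g^{-1}\ast\nabt h\ast \nabt h$ coming from the definition of $\Lc$ (the extra Christoffel-type piece in $\Lc$ is precisely $g^{pr}g^{qs}\nabt_p g_{rs}\nabt_q V_{ij}$, which is of this form). The remaining second-derivative terms should be rearranged, after commuting a pair of covariant derivatives, to become $\frac{1}{2}(\nabt_j B_k + \nabt_k B_j)= \delta^*_{\gt} B$; the commutators of the form $[\nabt_i, \nabt_j]h$ generate $\widetilde{\Rm}\ast h$ contributions, and differentiating $g^{ip}$ produces further $(\nabt h)\ast(\nabt h)$ pieces. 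Along the way I would symmetrize in the free indices $j,k$, since the Ricci tensors are symmetric, and use $B_k = g^{ij}(\nabt_i h_{jk} - \tfrac12 \nabt_k h_{ij})$ to convert the surviving contracted second derivatives of $h$ into covariant derivatives of $B$.

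The main obstacle is purely bookkeeping: one must correctly pair each appearance of $g^{ip}\nabt_a\nabt_b h_{cd}$ with either the $\Lc(h)$ term, the $\delta^*_{\gt}B$ term, or a commutator producing $\widetilde{\Rm}\ast h$, and verify that the indices balance after symmetrization in $j,k$. There is no real analytical content here, only the risk of miscounting a factor of $\tfrac12$ or mishandling a commutator; nothing that one cannot confirm by working in normal coordinates for $\gt$ at the point of interest, where $\nabt$ reduces to ordinary partial differentiation and the structure of the identity becomes transparent. The absolute constants in the $\ast$-notation are harmless for the applications that follow, which is what allows us to lump all residual terms into the two schematic error expressions on the right-hand side of \eqref{eq:rcdiff}.
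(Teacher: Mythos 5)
Your proposal follows essentially the same route as the paper: begin from the contracted change-of-connection formula for the Ricci difference, substitute the Koszul-type expression for $A$, peel off $\Lc(h)$ from the $g^{ip}\nabt_i\nabt_p h_{jk}$ piece, commute covariant derivatives (yielding $\Rmt\ast h$) to push $\nabt_j$ and $\nabt_k$ to the outside, absorb the terms from differentiating $g^{-1}$ into $g^{-1}\ast g^{-1}\ast \nabt h\ast\nabt h$, and recognize the surviving divergence-type terms as $-2\delta_{\gt}^*B$. The one place where the paper is slightly slicker than your sketch is the treatment of the trace term $\nabt_j A^l_{kl}$: rather than symmetrizing in $j,k$ by appeal to the symmetry of the Ricci tensors (which works but leaves a small verification implicit), the paper writes $A^l_{kl} = \tfrac12\nabt_k\log\det(\gt^{-1}g)$ so that $\nabt_j A^l_{kl}$ is manifestly symmetric by torsion-freeness of $\nabt$, which makes the emergence of the symmetrized $\delta^*_{\gt}B$ term immediate.
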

\begin{proof}
Begin with the identity
\begin{align*}
 R_{jk} - \Rt_{jk} &= \nabt_l A_{jk}^l - \nabt_jA_{kl}^l + A_{pl}^lA^p_{jk} - A_{jp}^lA^p_{kl}.
\end{align*}
Using the identity \eqref{eq:a} and that $\nabt_k g^{ij} = -g^{ip}g^{jq}\nabt_kg_{pq}$, i.e., $\nabt g^{-1} = g^{-1}\ast g^{-1}\ast \nabt g$, we have that
\begin{align*}
    -2\nabt_{l}A^l_{jk} &= \nabt_l\left(g^{ml}\left(\nabt_mg_{jk} - \nabt_jg_{km} - \nabt_kg_{jm}\right)\right)\\
    &= \mathcal{L}(g)_{jk} - g^{ml}\left(\nabt_l\nabt_j g_{km} + \nabt_l\nabt_k g_{jm}\right) + g^{-1}\ast g^{-1}\ast\nabt g \ast \nabt g\\
   &= \mathcal{L}(h)_{jk} - g^{ml}\left(\nabt_l\nabt_j h_{km} + \nabt_l\nabt_k h_{jm}\right) + g^{-1}\ast g^{-1}\ast\nabt h \ast \nabt h\\
    &= \mathcal{L}(h)_{jk} - g^{ml}\left(\nabt_j\nabt_l h_{km} + \nabt_k\nabt_l h_{jm}\right)  + g^{-1}\ast g^{-1}\ast \nabt h \ast \nabt h\\
    &\phantom{=} + g^{ml}\left(\Rt_{ljk}^ph_{pm} + \Rt_{ljm}^ph_{kp} + \Rt_{lkj}^ph_{pm} + \Rt_{lkm}^ph_{jp}\right) \\    
  &= \mathcal{L}(h)_{jk} - \nabt_j\left(g^{ml}\nabt_l g_{km}\right)  - \nabt_k\left(g^{ml}\nabt_l g_{jm}\right)  + g^{-1}\ast g^{-1}\ast \nabt h \ast \nabt h\\
    &\phantom{=} + g^{-1}\ast \Rmt \ast h.
\end{align*}
Here, $\Rt_{ljk}^p = \gt^{pq}\Rt_{ljkq}$. On the other hand, 
\begin{align*}
 2\nabt_jA^{l}_{kl} &= \nabt_j\left(g^{ml}\left(\nabt_k g_{lm} +\nabt_l g_{km} - \nabt_m g_{kl}\right)\right)\\
		    &= \nabt_j\left(g^{ml}\left(\nabt_k g_{lm}\right)\right) = \nabt_j\nabt_k\log\det(\gt^{ac}g_{cb})\\
		    &= \frac{1}{2}\nabt_j\nabt_k\log\det(\gt^{ac}g_{cb}) + \frac{1}{2}\nabt_k\nabt_j\log\det(\gt^{ac}g_{cb})\\
		    &= \frac{1}{2} \nabt_j\left(g^{ml}\left(\nabt_k g_{lm}\right)\right)
      + \frac{1}{2}\nabt_k\left(g^{ml}\left(\nabt_j g_{lm}\right)\right),
\end{align*}
while
\[	
  2\left(A_{lp}^lA^p_{jk} - A_{jp}^lA^p_{lk}\right) = g^{-1}\ast g^{-1}\ast \nabt h \ast\nabt h.
\]
Combining these three identities proves equation \eqref{eq:rcdiff}.
\end{proof}

\section{Evolution equations} We will now assume that $g = g(t)$ and $\gt = \gt(t)$ are two solutions to the Ricci flow on 
$M\times [0, T]$, and continue to use the notation $h = g -\gt$, $A = \nabla - \nabt$, and $B = \operatorname{Bian}(g, \nabt, g)$.  
The viability of our choice of energy quantity relies on the following simple computation which shows that the time derivative of $B$ 
does not depend on the full difference $\nabla \Rc - \nabt \Rct$  of the covariant derivatives of the
Ricci tensors of the solutions.

\begin{lemma} The Bianchi one-form $B = \operatorname{Bian}(g, \nabt, g)$ evolves by the equation
\begin{align}\label{eq:bev}
\begin{split}
  \pdt B_k &= - 2R_{kp}g^{pq}B_q + 2g^{ma}g^{lb}R_{ab}\left(\nabt_m h_{lk} - \frac{1}{2}\nabt_kh_{ml} \right) \\
&\phantom{=}- 2g_{pk}g^{mc}\gt^{ld}\gt^{ps}h_{cd}\left(\nabt_m\Rt_{ls}
  - \frac{1}{2}\nabt_s\Rt_{lm}\right).
\end{split}
\end{align}
\end{lemma}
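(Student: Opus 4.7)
The plan is to differentiate $B_k = g^{ij}\bigl(\nabt_i g_{jk} - \tfrac{1}{2}\nabt_k g_{ij}\bigr)$ term-by-term in $t$ and organize the output so that (i) the contracted second Bianchi identities for $g$ and for $\gt$ can be invoked to eliminate the two potentially top-order pieces $\nabla R$ and $\nabt \Rt$, and (ii) everything that survives is either a multiple of $B$ or linear in $h$.

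First I would record the three basic variations under the coupled flows: $\pdt g^{ij} = 2g^{ip}g^{jq}R_{pq}$, $\pdt g_{jk} = -2R_{jk}$, and the standard formula $\pdt \Gamt^l_{ij} = -\gt^{lm}\bigl(\nabt_i \Rt_{jm} + \nabt_j \Rt_{im} - \nabt_m \Rt_{ij}\bigr)$. Contracting $\pdt g^{ij}$ against $\nabt_i g_{jk} - \tfrac{1}{2}\nabt_k g_{ij} = \nabt_i h_{jk} - \tfrac{1}{2}\nabt_k h_{ij}$ (since $\nabt \gt = 0$) already yields the middle term of \eqref{eq:bev}. The remaining contributions come from (a) the $\pdt g_{jk} = -2R_{jk}$ variations and (b) the $\pdt \Gamt$ corrections inside $\pdt(\nabt_i g_{jk})$ and $\pdt(\nabt_k g_{ij})$.

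For (a), the outcome is the combination $-2g^{ij}\nabt_i R_{jk} + g^{ij}\nabt_k R_{ij}$. Rewriting $\nabt = \nabla - A$, the $\nabla$-terms cancel via the contracted second Bianchi identity for $g$, and a short calculation shows that the $A\ast R$ remainders collapse, using the symmetry of $A$ in its lower indices and of $R_{ij}$, to the single term $-2R_{lk}g^{ij}A^l_{ij} = -2R_{kp}g^{pq}B_q$. For (b), I expect the two traces of the form $U^i_{ik}$ that arise from each of $\pdt(\nabt_i g_{jk})$ and $\pdt(\nabt_k g_{ij})$ to cancel, leaving only the ``untraced'' piece $g^{ij}(\pdt \Gamt^l_{ij})g_{lk}$.

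The subtle step is handling this last piece. After symmetrizing in $i,j$ it reads $-g^{ij}\gt^{lm}\bigl(2\nabt_i \Rt_{jm} - \nabt_m \Rt_{ij}\bigr) g_{lk}$. The key move is to split $g^{ij} = \gt^{ij} + (g^{ij} - \gt^{ij})$: the $\gt^{ij}$-part of the bracket vanishes by the contracted second Bianchi identity \emph{for} $\gt$, while the difference satisfies the algebraic identity $g^{ij} - \gt^{ij} = -g^{ic}\gt^{jd}h_{cd}$. Substituting this and relabeling indices delivers the last term of \eqref{eq:bev} exactly. The main obstacle is not conceptual but organizational: one must carefully group the output of $\pdt(\nabt_i g_{jk})$ and $\pdt(\nabt_k g_{ij})$, exploit the $(i,j)$-symmetry of $\pdt \Gamt^l_{ij}$, and keep straight which contractions are with $g$ and which with $\gt$ so that each Bianchi identity can be applied at precisely the right moment.
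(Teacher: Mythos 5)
Your plan matches the paper's proof step for step: differentiate $B_k$ directly, obtain the middle term from $\pdt g^{-1}$, eliminate the $\nabt R$-terms via $\operatorname{Bian}(g,\nabla,\Rc)=0$ rewritten through $\nabla = \nabt + A$ to leave $-2R_{kp}g^{pq}B_q$, reduce the $\pdt\Gamt$ contribution to the single piece $-g^{ij}(\pdt\Gamt^l_{ij})g_{lk}$, and then invoke $\operatorname{Bian}(\gt,\nabt,\Rct)=0$ together with $g^{ij}-\gt^{ij}=-g^{ic}\gt^{jd}h_{cd}$ to produce the last term. One small slip in the sketch: the surviving $\pdt\Gamt$ piece carries a minus sign, $-g^{ij}(\pdt\Gamt^l_{ij})g_{lk}$, so after substituting the variation formula it reads $+g^{ij}\gt^{lm}\bigl(2\nabt_i\Rt_{jm}-\nabt_m\Rt_{ij}\bigr)g_{lk}$ rather than the expression you wrote; carrying this sign through is needed to land on the stated $-2g_{pk}g^{mc}\gt^{ld}\gt^{ps}h_{cd}\bigl(\nabt_m\Rt_{ls}-\tfrac12\nabt_s\Rt_{lm}\bigr)$.
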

\begin{proof}
To begin with, we compute that
\begin{align*}
\begin{split}
 \pdt B_k &= \pdt\left(g^{ml}\left(\nabt_m g_{kl} - \frac{1}{2}\nabt_k g_{ml}\right)\right)\\
  &= 2g^{ma}g^{lb}R_{ab}\left(\nabt_{m}g_{kl} - \frac{1}{2}\nabt_kg_{lm}\right) - 2g^{ml}\left(\nabt_m R_{kl} - \frac{1}{2}\nabt_kR_{lm} \right)\\
 &\phantom{=}
 - g^{ml}\left(\pdt\Gamt_{ml}^pg_{pk} + \pdt\Gamt_{mk}^pg_{pl} -\frac{1}{2}\pdt\Gamt_{kl}^pg_{pm} - \frac{1}{2}\pdt\Gamt^p_{km}g_{pl}\right)
\end{split}\\
  &= 2g_{pk}g^{ma}g^{lb}R_{ab}A_{ml}^p - 2g^{ml}\left(\nabt_m R_{kl} - \frac{1}{2}\nabt_kR_{lm} \right)- g^{ml}\pdt\Gamt_{ml}^pg_{pk}.
\end{align*}
Since $\operatorname{Bian}(g, \nabla, \Rc) = 0$, we can rewrite the second term using
\begin{align*}
  &g^{ml}\left(\nabt_m R_{kl} - \frac{1}{2}\nabt_kR_{lm}\right)
= g^{ml}\left(A_{mk}^pR_{pl} + A_{ml}^pR_{kp} - \frac{1}{2}A_{kl}^pR_{pm} - \frac{1}{2}A_{km}^pR_{lp}\right)\\
&\quad = g^{ml}R_{kp}A_{ml}^p = R_{kp}g^{pq}B_q.  
\end{align*}
For the last term, we use the standard formula
\[
 \pdt \Gamt_{ml}^p = \gt^{pq}\left(\nabt_{q}\Rt_{ml} - \nabt_{m}\Rt_{lq} - \nabt_{l}\Rt_{mq}\right),
\]
from which, using $\operatorname{Bian}(\gt, \nabt, \Rct) = 0$ and $g^{ml} - \gt^{ml} = -g^{mc}\gt^{ld}h_{cd}$,
we find that
\begin{align*}
 -g^{ml}\pdt\Gamt_{ml}^pg_{pk} &= 2g^{ml}\gt^{ps}g_{pk}\left(\nabt_m \Rt_{ls} - \frac{1}{2}\nabt_s \Rt_{ml}\right)\\
  &= 2(g^{ml} - \gt^{ml})\gt^{ps}g_{pk}\left(\nabt_m \Rt_{ls} - \frac{1}{2}\nabt_{s}\Rt_{lm}\right)\\
  &= -2g^{mc}\gt^{ld}\gt^{ps}g_{pk}h_{cd}\left(\nabt_m \Rt_{ls}- \frac{1}{2}\nabt_s\Rt_{lm}\right).
\end{align*}
Combining the above three identities proves \eqref{eq:bev}.
\end{proof}

From \eqref{eq:rcdiff} and \eqref{eq:bev}, we obtain the following expressions for the evolutions of the norms of $h$ and $B$
relative to the metric $\gt$.

\begin{lemma}
Let $g$ and $\gt$ be solutions to the Ricci flow on $M\times [0, T]$ and
denote by $\langle \cdot, \cdot\rangle \dfn \langle \cdot, \cdot\rangle_{\gt}$ and $|\cdot| \dfn |\cdot|_{\gt}$
the inner products and norms induced by $\gt$ on the bundles $T^k_l(M)$. Then there is a constant $C_0 = C_0(n)$
such that
\begin{align}
\label{eq:hnormev}
  \pdt |h|^2 &\leq 2 \langle \Lc(h) -2\delta^*_{\gt}B, h\rangle + C_0(|\Rct| 
+ |g^{-1}||\Rmt|)|h|^2 + C_0|g^{-1}|^2|h||\nabt h|^2\\
\begin{split}
\label{eq:bnormev}
  \pdt |B|^2 &\leq C_0\left(|\Rct| + |g^{-1}||\Rc|\right)|B|^2 + C_0|g||g^{-1}||\nabt\Rct||h||B|  \\
  &\phantom{\leq}+ C_0|g^{-1}|^2|\Rc||\nabt h||B|.
\end{split}
\end{align}
\end{lemma}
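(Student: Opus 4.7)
The plan is to compute the pointwise time derivatives of $|h|^2 = \gt^{ia}\gt^{jb}h_{ij}h_{ab}$ and $|B|^2 = \gt^{ij}B_iB_j$ directly, using the three inputs already at our disposal: the Ricci flow equation $\pdt \gt_{ij} = -2\Rct_{ij}$ (and its dual $\pdt \gt^{ij} = 2\gt^{ia}\gt^{jb}\Rct_{ab}$), the expression \eqref{eq:rcdiff} for $-2(\Rc - \Rct)$, and the evolution \eqref{eq:bev} for $\pdt B_k$. In both calculations, the explicit inner-product terms on the right-hand side come from direct substitution of these identities, and the remaining contributions are estimated by Cauchy--Schwarz together with the schematic product bound $|X\ast Y|_{\gt}\leq C|X|_{\gt}|Y|_{\gt}$; purely dimensional factors such as $|\gt^{-1}|_{\gt}^{2}=n$ and the number of summands in each $\ast$-expression are absorbed into $C_0$.

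For \eqref{eq:hnormev}, differentiating $|h|^2$ in $t$ produces two kinds of terms. The derivatives of the two $\gt^{-1}$ factors bring down a factor of $\Rct$ contracted against $h\otimes h$, which contributes at most $C|\Rct||h|^2$ in $\gt$-norm. The remaining term is $2\gt^{ia}\gt^{jb}(\pdt h_{ij})h_{ab}$, and since both $g$ and $\gt$ solve the Ricci flow we have $\pdt h_{ij} = -2(R_{ij} - \Rct_{ij})$. Substituting \eqref{eq:rcdiff} rewrites this as $2\langle \Lc(h) - 2\delta^*_{\gt}B, h\rangle$ plus contributions of the schematic form $g^{-1}\ast g^{-1}\ast \nabt h\ast\nabt h\ast h$ and $g^{-1}\ast \Rmt\ast h\ast h$, which yield the last two terms of \eqref{eq:hnormev} after estimating each factor in its $\gt$-norm.

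For \eqref{eq:bnormev}, the derivative of the single $\gt^{-1}$ in $|B|^2$ contributes at most $C|\Rct||B|^2$, and the derivative of $B$ is expanded via \eqref{eq:bev} into three pieces. The first, $-2R_{kp}g^{pq}B_q$, paired with $B$, is bounded by $C|g^{-1}||\Rc||B|^2$. The second is of schematic form $g^{-1}\ast g^{-1}\ast \Rc\ast\nabt h$, whose pairing with $B$ yields $C|g^{-1}|^2|\Rc||\nabt h||B|$. The third is of schematic form $g\ast g^{-1}\ast\gt^{-1}\ast\gt^{-1}\ast h\ast\nabt\Rct$, where the $g_{pk}$ out front is precisely what produces the factor of $|g|$, and its pairing with $B$ is bounded by $C|g||g^{-1}||\nabt\Rct||h||B|$. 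Summing the four contributions gives \eqref{eq:bnormev}.

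The only delicate issue is bookkeeping: because $g$- and $\gt$-contractions are intermixed, every $g_{ij}$ or $g^{ij}$ factor that is not cancelled by the $\gt$-norm on a tensor must be accounted for by a factor of $|g|$ or $|g^{-1}|$, and one must verify that the specific weights shown in \eqref{eq:bnormev} (for instance the single $|g|$ and the single $|g^{-1}|$ on the $|\nabt \Rct||h||B|$ term) emerge correctly. I would therefore carry out each contraction once in full indices to confirm the exponents, and only then collapse the expression into the schematic form above. No new analytic input beyond repeated Cauchy--Schwarz is required.
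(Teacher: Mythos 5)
Your proposal is correct, and it is exactly the computation the paper intends: the paper states this lemma without proof, citing only \eqref{eq:rcdiff} and \eqref{eq:bev}, and your argument supplies precisely the direct substitution and Cauchy--Schwarz estimates that are being implicitly invoked. The index bookkeeping you flag (the single $|g|$ from the $g_{pk}$ factor, the $|g^{-1}|$ weights from the various $g^{ij}$ contractions, and the $\gt^{-1}$ factors being absorbed into the $\gt$-norm) all come out as you describe.
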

In the above equations, the curvature of $g$ only appears explicitly through the factors of $|\Rc|$ in the evolution equation for $|B|^2$. 
As we mentioned above, this is one advantage of the present method over that in \cite{KotschwarRFUniqueness}.
\section{The energy argument}

We now set about to prove Theorem \ref{thm:uniqueness}. First we modify a lemma from \cite{KotschwarRFUniqueness}
which follows a construction of \cite{KarpLi} (see also Theorem 12.22 of \cite{RFV2P2}). 
\begin{lemma}\label{lem:cutoffgrowth}
Suppose that $\bar{g}$ is a complete metric on $M$ and $\gt(t)$ is a smooth family of complete metrics on $M\times [0, T]$ satisfying $\bar{g} \leq \gamma \gt(t)$
for some fixed $\gamma > 0$.
Define $\bar{r}(x) \dfn \operatorname{dist}_{\bar{g}}(x, x_0)$ for some $x_0\in M$. 
  Then, for any $L_1$, $L_2 > 0$, there exists a positive constant $T^{\prime} = T^{\prime}(\gamma, n, L_1, L_2)$
and, for each $0 < \tau \leq T^{\prime}$, a smooth function $\eta = \eta_{\tau}: M\times [0, \tau]\to (0, \infty)$ such
that
\[
-\pd{\eta}{t} + L_1 |\nabt\eta|^2_{\gt(t)} \leq 0,\quad\mbox{and}\quad
   \eta(x, t) \geq L_2\bar{r}^2(x),
\]
 on $M\times[0, \min\{\tau, T\}]$.
\end{lemma}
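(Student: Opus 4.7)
The plan is to adapt the Karp--Li construction used in Section 3 of \cite{KotschwarRFUniqueness} (see also \cite{KarpLi}) by modulating a smooth proxy for $\bar{r}$ with a suitably chosen function of time. Specifically, the plan is to seek $\eta$ of the separated form $\eta(x,t) = f(t)\rho^2(x)$, where $\rho$ is a smoothing of $\bar{r}$ and $f$ is a positive solution of a scalar ODE on $[0,T^{\prime})$.

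First I would replace $\bar{r}$ with a smooth function $\rho : M \to [1,\infty)$ satisfying $\rho(x) \geq \bar{r}(x)$ together with a uniform gradient bound $|\nabla^{\bar{g}}\rho|_{\bar{g}} \leq C_1$ for some dimensional constant $C_1 = C_1(n)$. Such a $\rho$ can be produced by a standard smoothing of $\bar{r} + 1$, for instance by mollification along $\bar{g}$-geodesics emanating from $x_0$ or by the Greene--Wu procedure; the completeness of $\bar{g}$ ensures the construction extends to all of $M$. The hypothesis $\bar{g} \leq \gamma\,\gt(t)$ translates into the dual inequality $\gt(t)^{-1} \leq \gamma\,\bar{g}^{-1}$ on $T^*M$, so the bound on $\rho$ transfers uniformly to every time-slice:
\[
    |\nabt\rho|^2_{\gt(t)} \leq \gamma|\nabla^{\bar{g}}\rho|^2_{\bar{g}} \leq \gamma C_1^2, \qquad t \in [0,T].
\]

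A direct computation with $\eta = f\rho^2$ then gives
\[
    -\pd{\eta}{t} + L_1|\nabt\eta|^2_{\gt(t)} = \rho^2\left(-f^{\prime}(t) + 4L_1 f^2(t)|\nabt\rho|^2_{\gt(t)}\right) \leq \rho^2\left(-f^{\prime}(t) + 4L_1\gamma C_1^2 f^2(t)\right),
\]
so it suffices to arrange $f^{\prime} \geq 4L_1\gamma C_1^2 f^2$ with $f(0) = L_2$. Taking $f$ to solve this ODE as an equality yields the explicit formula
\[
    f(t) = \frac{L_2}{1 - 4L_1L_2\gamma C_1^2\,t},
\]
which is smooth, positive, and monotone increasing on $[0,T^{\prime})$ for $T^{\prime} \dfn (4L_1L_2\gamma C_1^2)^{-1}$, a quantity depending only on $\gamma$, $n$, $L_1$, and $L_2$. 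For any $0 < \tau \leq T^{\prime}$, the function $\eta_\tau \dfn f\rho^2$ on $M\times[0,\tau]$ is smooth and strictly positive; the displayed differential inequality holds on $M\times[0,\min\{\tau,T\}]$ by the computation above, while $f(t) \geq L_2$ and $\rho \geq \bar{r}$ force the pointwise lower bound $\eta_\tau(x,t) \geq L_2\bar{r}^2(x)$.

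The main obstacle I anticipate is the construction of $\rho$: on a general complete Riemannian manifold with no curvature assumptions, $\bar{r}$ is merely Lipschitz, and producing a smooth substitute with a purely dimensional gradient bound requires a separate argument. This smoothing is classical, however, and can simply be cited; once $\rho$ is in hand the remainder of the proof is an ODE computation and a pointwise comparison of metrics, as outlined above.
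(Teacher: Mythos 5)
Your approach is essentially the same as the paper's: replace $\bar{r}$ by a Greene--Wu smoothing $\rho$ with a controlled $\bar{g}$-gradient, pull that bound over to $\gt(t)$ via $\bar g \leq \gamma\gt(t)$, take $\eta$ in separated form $f(t)\rho^2(x)$, and reduce the differential inequality to a Riccati-type scalar inequality for $f$. The paper's own choice is $\eta_\tau(x,t) = \beta\rho^2(x)/\bigl(4(2\tau - t)\bigr)$ with $\beta \leq 1/(4L_1\gamma)$, which is the same ansatz with a $\tau$-dependent $f$.

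There is one small defect worth flagging. The lemma asks for $\eta_\tau$ to be a smooth positive function on the \emph{closed} strip $M\times[0,\tau]$ for every $\tau\in(0,T^{\prime}]$, in particular for $\tau = T^{\prime}$. Your $f(t) = L_2/(1 - 4L_1L_2\gamma C_1^2\,t)$ blows up exactly at $t = T^{\prime} = (4L_1L_2\gamma C_1^2)^{-1}$, so $\eta_{T^{\prime}}$ is not defined at the right endpoint. The fix is trivial: shrink $T^{\prime}$ (e.g.\ take $T^{\prime} = (8L_1L_2\gamma C_1^2)^{-1}$), or, closer to the paper, let $f$ depend on $\tau$ so the pole sits strictly outside $[0,\tau]$, as in $f(t) = \beta/\bigl(4(2\tau - t)\bigr)$. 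With that adjustment the argument is complete.
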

\begin{proof}
According to Proposition 2.1 of \cite{GreeneWu}, we can find $\rho\in C^{\infty}(M)$ satisfying that
$\bar{r}(x) \leq \rho(x) \leq \bar{r}(x) + 1$ and  $|\delb \rho|_{\bar{g}} \leq 2$.
Then, as in Lemma 5 of \cite{KotschwarRFUniqueness}, if one successively chooses $0 < \beta \leq 1/(4L_1\gamma)$ and
$T^{\prime} =\sqrt{\beta/(8L_2)}$, the function $\eta_{\tau}(x, t) = \beta\rho^2(x)/(4(2\tau - t))$
will satisfy the desired conditions for any $0 < \tau \leq \min\{T,  T^{\prime}\}$.
\end{proof}
 
Next we update inequalities \eqref{eq:hnormev} and \eqref{eq:bnormev} to reflect the assumptions of Theorem \ref{thm:uniqueness}. In the rest of this section, we will write $T^* = \max\{T, 1\}$,
and use the unadorned notation $\langle \cdot, \cdot\rangle = \langle \cdot, \cdot \rangle_{\gt(t)}$ and $|\cdot| = |\cdot|_{\gt(t)}$ to represent the
inner products and norms induced by $\gt(t)$ on the various tensor bundles $T^k_l(M)$. 
\begin{lemma}\label{lem:hbnormest}
 Under the assumptions of Theorem \ref{thm:uniqueness}, there exists a positive constant $N_0$, depending only
 on $\sigma$, $n$, $K$, and $T^*$, such that 
\begin{align}
\label{eq:hnormest}
 \pdt \left(\frac{|h|^2}{t^{1+\sigma}}\right) & \leq -\frac{1+\sigma}{t^{2+\sigma}}(1- N_0t^{\sigma})|h|^2 + 
 \frac{2}{t^{1+\sigma}} \langle \Lc(h) -2\delta^*_{\gt}B, h\rangle + \frac{N_0}{t}|\nabt h|^2,\\
 \label{eq:bnormest}
  \pdt\left(\frac{|B|^2}{t^{\sigma}}\right)&\leq -\frac{\sigma}{2t^{1+\sigma}}(1- N_0t^{\sigma})|B|^2 + \frac{N_0}{t^{2-\sigma}}|h|^2 + \frac{N_0}{t}|\nabt h|^2 
\end{align} 
on $M\times (0, T]$.
\end{lemma}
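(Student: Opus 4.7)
The plan is to start from the evolution inequalities \eqref{eq:hnormev} and \eqref{eq:bnormev}, differentiate $t^{-(1+\sigma)}|h|^2$ and $t^{-\sigma}|B|^2$ via the product rule, and use the curvature hypothesis \eqref{eq:curvgrowth} to render each error term absorbable---either into the coercive contributions $-(1+\sigma)\, t^{-(2+\sigma)}|h|^2$ and $-\sigma\, t^{-(1+\sigma)}|B|^2$ that arise from differentiating the prefactors, or into one of the explicit $|h|^2$ and $|\nabt h|^2$ terms permitted on the right-hand side of the claim. Two standing estimates underpin the computation. First, by \eqref{eq:curvgrowth} and the argument of Hamilton \cite{Hamilton3D} noted after Theorem~\ref{thm:uniqueness}, $g(t)$ and $\gt(t)$ are uniformly equivalent on $[0, T]$, so $|g|$ and $|g^{-1}|$ are bounded by a constant $C_1 = C_1(n, K, \sigma, T^*)$. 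Second, integrating $\pdt h = -2(\Rc - \Rct)$ from $t = 0$ and using \eqref{eq:curvgrowth} together with this equivalence yields the pointwise decay
\begin{equation*}
|h(t)| \leq C_2(n, K, \sigma, T^*)\, t^{\sigma} \qquad \text{on } (0, T],
\end{equation*}
since $h(0) = 0$. This rate is precisely what is needed to tame the cubic term $C_0|g^{-1}|^2|h||\nabt h|^2$ in \eqref{eq:hnormev}.

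For \eqref{eq:hnormest}, after division by $t^{1+\sigma}$, the quadratic curvature error $(|\Rct| + |g^{-1}||\Rmt|)|h|^2 \leq CK t^{\sigma-1}|h|^2$ becomes $CK t^{\sigma}\cdot t^{-(2+\sigma)}|h|^2$ and is subsumed by the coercive term provided $N_0$ is taken at least $CK/(1+\sigma)$. The cubic error, using the decay of $|h|$, is bounded by $C t^{\sigma}|\nabt h|^2$ and therefore produces exactly a $t^{-1}|\nabt h|^2$ contribution after division by $t^{1+\sigma}$.

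The second inequality \eqref{eq:bnormest} proceeds analogously; the term $(|\Rct| + |g^{-1}||\Rc|)|B|^2/t^{\sigma}$ gives $\leq CK t^{\sigma}\cdot t^{-(1+\sigma)}|B|^2$, absorbable into half of the coercive $|B|^2$ term, and $|g^{-1}|^2|\Rc||\nabt h||B|/t^{\sigma} \leq CKt^{-1}|\nabt h||B|$ splits via a constant Young parameter into acceptable $t^{-1}|\nabt h|^2$ and $t^{-1}|B|^2$ pieces. The main obstacle is the mixed term $|g||g^{-1}||\nabt\Rct||h||B|/t^{\sigma}$, which requires a derivative estimate for the Ricci tensor of $\gt$. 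Since $\gt$ evolves by Ricci flow with $|\Rmt| \leq 2^{1-\sigma}K/t^{1-\sigma}$ on the interval $[t/2, t]$, Shi's derivative estimate applied there gives $|\nabt\Rct|(t) \leq CK t^{\sigma - 3/2}$, reducing the mixed term to $\leq CKt^{-3/2}|h||B|$. I would then apply Young's inequality with the $t$-dependent parameter $\epsilon = c\, t^{1/2-\sigma}$, where $c$ is chosen small in terms of $\sigma$ and $K$: this produces an $|h|^2$-piece of weight $t^{-(2-\sigma)}$ matching the claimed bound and a $|B|^2$-piece of weight $t^{-(1+\sigma)}$ absorbable into the remaining half of the coercive $|B|^2$ term. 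Choosing $N_0$ sufficiently large (depending on $\sigma$, $n$, $K$, $T^*$) at each step closes the estimate.
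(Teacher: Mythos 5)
Your proposal is correct and follows essentially the same route as the paper: establish uniform equivalence of the metrics (hence uniform bounds on $|g|$, $|g^{-1}|$ and on $|\Rc|_{\gt}$), derive $|h|\leq Ct^\sigma$ by a pointwise integration argument, invoke Shi's estimate on a shifted interval to get $|\nabt\Rct|\leq Ct^{\sigma-3/2}$, and then use the product rule together with Young's inequality at $t$-dependent scales to absorb every error term into the coercive contributions or the allowed $|h|^2$, $|\nabt h|^2$ terms. The choice $\epsilon\propto t^{1/2-\sigma}$ for the mixed term $t^{-3/2}|h||B|$ produces exactly the claimed $t^{-(2-\sigma)}|h|^2$ and $t^{-(1+\sigma)}|B|^2$ pieces, matching the paper's splitting.
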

\begin{proof}
We will use $N$ to denote a sequence of positive constants depending on $\sigma$, $n$, $K$, and $T^*$. 
We note first that, from the curvature bounds \eqref{eq:curvgrowth}, it follows (see, e.g., Lemma 6.49 of \cite{ChowKnopf}) that
the families of metrics $g(t)$ and $\gt(t)$ are mutually uniformly equivalent, and in fact,
\[
    N^{-1}\gt(s) \leq g(t) \leq N\gt(s), \quad N^{-1}\gt(s)\leq \gt(t) \leq N\gt(s),
\]
for any $s$, $t\in [0, T]$. In particular, we have uniform bounds of the form
\[
  |g^{-1}| + |g|+ t^{1-\sigma}|\Rc| \leq N, \quad |h|\leq Nt^{\sigma}, 
\]
on $M\times [0, T]$. (The point of the first bound is that the terms on the left involve the norm $|\cdot| = |\cdot|_{\gt}$ rather than $|\cdot|_{g}$;
the second bound can be deduced from a simple pointwise estimate as in, e.g., Lemma 6 of \cite{KotschwarRFUniqueness}.) Also, by Shi's estimates \cite{Shi} (see Corollary 9 in \cite{KotschwarRFUniqueness}), the assumptions on $\gt$ imply a bound
of the form $t^{3/2 - \sigma}|\nabt \Rct| \leq N$ on $M\times [0, T]$. 

Combined with these observations, inequalities \eqref{eq:hnormev} and \eqref{eq:bnormev} say that
\begin{align*}
%\label{eq:hnormest1}
   \pdt |h|^2 &\leq 2\langle \Lc(h) -2\delta^*_{\gt}B, h\rangle + \frac{N}{t^{1-\sigma}}|h|^2 + Nt^{\sigma}|\nabt h|^2,\\
\begin{split}
%\label{eq:bnormest1}
  \pdt |B|^2 &\leq \frac{N}{t^{1-\sigma}}\left(|B|^2 + |\nabt h||B| + \frac{|h||B|}{t^{1/2}}\right),
\end{split}
\end{align*}
on $M\times (0, T]$. Thus,
\begin{align}
\label{eq:hnormest2}
\begin{split}
   \pdt \left(\frac{|h|^2}{t^{1+\sigma}}\right) &\leq -\frac{1+\sigma}{t^{2+\sigma}}(1-Nt^{\sigma})|h|^2 
   + \frac{2}{t^{1+\sigma}}\langle \Lc(h) -2\delta^*_{\gt}B, h\rangle
      + \frac{N}{t}|\nabt h|^2,
\end{split}\\
\begin{split}
\label{eq:bnormest2}
  \pdt \left(\frac{|B|^2}{t^{\sigma}}\right) &\leq -\frac{\sigma}{t^{1+\sigma}}(1-Nt^{\sigma})|B|^2 +  \frac{N}{t}|\nabt h||B| + \frac{N}{t^{3/2}}|h||B|,
\end{split}
\end{align}
on $M\times (0, T]$.  Estimating the last two terms in \eqref{eq:bnormest2} by
\[
\frac{N}{t}|\nabt h||B| \leq \frac{N}{t}\left(|\nabt h|^2 + |B|^2\right),
\quad \frac{N}{t^{3/2}}|h||B| \leq  \frac{N}{t^{2-\sigma}}|h|^2 + \frac{\sigma}{2t^{1+\sigma}}|B|^2,
\]
we obtain \eqref{eq:hnormest} and \eqref{eq:bnormest}.
\end{proof}
 
\subsection{Proof of Theorem \ref{thm:uniqueness}} 
Now we prove the main result of this paper. At this point, the argument is essentially a special
case of that of Theorem 13 in \cite{KotschwarRFUniqueness}, but, since the statement of that theorem does not exactly match our situation, 
and, in any case, our argument is short, we will give the details here. We discuss the relationship of the argument here
to the formulation in \cite{KotschwarRFUniqueness} somewhat further in Section \ref{ssec:pdeode} below.

\begin{proof}[Proof of Theorem \ref{thm:uniqueness}] 
We will give the proof for noncompact $M$; the argument for compact $M$ follows the same lines but is much simpler since we have uniform bounds for the curvature tensors
of both solutions (and their covariant derivatives) and can work instead with a global energy quantity.
To reduce clutter, we will use $C$
to denote a sequence of positive constants depending only on $n$ and, as before, use $N$ to denote a sequence of positive constants
depending on $\sigma$, $n$, $K$, and $T^*$.

We will need a cut-off function. Fix $x_0\in M$ and define $\bar{r}(x) \dfn d_{\bar{g}}(x_0, x)$ where $\bar{g}= \gt(T)$.
Again using \cite{GreeneWu}, we can find $\rho\in C^{\infty}(M)$ satisfying $\bar{r}(x) \leq \rho(x) \leq \bar{r}(x) + 1$
and $|\delb \rho|_{\bar{g}} \leq 2$. Let $\phi\in C_c^{\infty}(\RR, [0, 1])$ be a monotonically decreasing function
which is identically one on  $(-\infty, 2]$, is supported in $(-\infty, 3)$, and satisfies
$(\phi^{\prime})^2\leq 10 \phi$.  For $r > 0$, the function $\theta_r(x) \dfn \phi(\rho(x)/r)$ will then belong to $C^{\infty}_c(B_{\bar{g}}(x_0, 3r))$, and satisfy $\theta_r \equiv 1$ on $B_{\bar{g}}(x_0, r)$ and 
$|\bar{\nabla}\theta_r|_{\bar{g}}^2 \leq 40r^{-2}\theta_r$ on all of $M$.

For the time-being, we fix some $0 < \tau \leq T$ and let 
$\eta:M\times [0, \tau]\to \RR$ denote an arbitrary smooth positive function that is increasing in $t$.
We will later use Lemma \ref{lem:cutoffgrowth} to specify $\eta = \eta_{\tau}$ with $\tau = \min\{T, T^{\prime}\}$
for some sufficiently small $T^{\prime} > 0$ depending on $\sigma$, $n$, and $K$, and $T^*$.

For each $r > 0$
 and $t\in [0, \tau]$ we define
\[
   \Bc_r(t) \dfn \int_M|B|^2_{\gt(t)}\theta_r e^{-\eta}\,d\mu_{\gt(t)}, \quad \Hc_r(t) \dfn \int_M|h|^2_{\gt(t)}\theta_r e^{-\eta}\,d\mu_{\gt(t)},
\]
and
\[
   \Kc_r(t) \dfn \int_M|\nabt h|^2_{\gt(t)}\theta_r e^{-\eta}\,d\mu_{\gt(t)}.
\]
(When $M$ is compact, we can take $\theta_r \equiv 1$ and $\eta \equiv 0$.)
The bulk of the argument consists of deriving an appropriate differential inequality for a weighted combination of $\Bc_r(t)$ and $\Hc_r(t)$.

We start by considering the derivative of $t^{-\sigma}\Bc_r(t)$. Fix $r > 0$ and note that, by \eqref{eq:bnormest} and our assumptions that $t^{1-\sigma}|\Rt|\leq N$ and $\pd{\eta}{t}\geq 0$, we have
\begin{align}\nonumber
\frac{d}{dt}\left(\frac{\Bc_{r}(t)}{t^{\sigma}}\right) &= \int_M 
\left(\pdt\left(\frac{|B|^2}{t^{\sigma}}\right) - \frac{|B|^2}{t^{\sigma}}\left(\Rt +\pd{\eta}{t}\right)\right)\theta_r e^{-\eta}\,d\mu_{\gt}\\
\nonumber
&\leq\int_M\left(\frac{N}{t^{2-\sigma}}|h|^2 + \frac{N}{t}|\nabt h|^2 -\frac{\sigma}{2t^{1+\sigma}}(1-Nt^{\sigma})|B|^2\right)\theta_r e^{-\eta}\,d\mu_{\gt}\\
\label{eq:brev}
&\leq -\frac{\sigma}{2t^{1+\sigma}}(1 - Nt^{\sigma})\Bc_r(t)
    + \frac{N}{t^{2-\sigma}}\Hc_r(t) + \frac{N}{t}\Kc_r(t)
\end{align}
on $[0, \tau]$.

Next we consider the derivative of $t^{-(1+\sigma)}\Hc_r(t)$. The uniform equivalence of $g(t)$ and $\gt(t)$ implies that we have
a lower bound of the form
\[
\alpha_0 |\nabt h|^2\leq g^{ij}\langle \nabt_i h, \nabt_j h\rangle
\] 
on $M\times [0, T]$ for some positive constant $\alpha_0 = \alpha_0(\sigma, n, K, T^*)$, so, integrating by parts and 
using that $|B| \leq C|g^{-1}||\nabt h|$ and $|\delta_{\gt}h| \leq C|\nabt h|$, we obtain the inequality
\begin{align*}
\begin{split}
& \int_M\langle \Lc(h) -2\delta^*_{\gt}B, h\rangle\theta_r e^{-\eta}\,d\mu_{\gt}\\
&= -\int_{M}\left(g^{ij}\langle \nabt_i h, \nabt_j h\rangle +2\langle \delta_{\gt}h, B\rangle
	-g^{ij}\nabt_i\eta\langle\nabt_j h, h\rangle + 2h(\nabt \eta, B^{\sharp})\right)\theta_r e^{-\eta}\,d\mu_{\gt}\\
&\phantom{=} -\int_M\left( g^{ij}\nabt_i\theta_r \langle\nabt_j h, h\rangle 
- 2h(\nabt \theta_r, B^{\sharp})\right)e^{-\eta}\,d\mu_{\gt}
\end{split}\\
\begin{split}
&\leq -\alpha_0\Kc_r + C_1\int_M\left(|\nabt h||B| + |\nabt \eta|\left(|g^{-1}||\nabt h||h| + |h||B|\right)\right)\theta_r e^{-\eta}\,d\mu_{\gt}\\
&\phantom{\leq} + C_1\int_M|\nabt\theta_r|\left(|g^{-1}||\nabt h||h| + |h||B|\right)e^{-\eta}\,d\mu_{\gt}
\end{split}
\end{align*}
for some constant $C_1 = C_1(n)$.
Here $(B^{\sharp})^i = \gt^{ij}B_j$. 

We now estimate the two integrals on the right. We have 
\[
  |\nabt h||B| \leq \frac{\alpha_0}{6C_1}|\nabt h|^2 + N|B|^2,
\]
and using again that $|g^{-1}|\leq N$, also that
\[
  |\nabt \eta|(|g^{-1}||h||\nabt h| + |h||B|) \leq N|\nabt \eta|^2|h|^2+\frac{\alpha_0}{6C_1}|\nabt h|^2 + |B|^2. 
\]
Further, on the set
\[
U_r \dfn \{\,x\in M\,|\, \nabt\theta_r \neq 0\,\} \subset B_{\bar{g}}(x_0, 3r) \setminus B_{\bar{g}}(x_0, r)
\]
we have
\[
 |\nabt\theta_r|\left(|g^{-1}||\nabt h||h| + |B||h|\right) \leq \frac{\alpha_0}{6C_1}|\nabt h|^2\theta_r +|B|^2\theta_r  + \frac{N|\nabt\theta_r|^2}{\theta_r}|h|^2,
\]
so the above inequality becomes
\begin{align}
\begin{split}\label{eq:ppart}
& \int_M\langle \Lc(h) -2\delta^*_{\gt}B, h\rangle\theta_r e^{-\eta}\,d\mu_{\gt}\\
&\quad\leq -\frac{\alpha_0}{2}\Kc_r(t) + N\Bc_r(t) + N\int_M\left(|\nabt\eta|^2\theta_r 
+ \theta_r^{-1}|\nabt \theta_r|^2\chi_{U_r}\right)|h|^2e^{-\eta}\,d\mu_{\gt}.
\end{split}
\end{align}

Starting from \eqref{eq:hnormest}, then, we compute that
\begin{align*}
\begin{split}
\frac{d}{dt}\left(\frac{\Hc_r(t)}{t^{1+\sigma}}\right)
&= \int_M\left(\pdt\left(\frac{|h|^2}{t^{1+\sigma}}\right) 
-\frac{1}{t^{1+\sigma}}\left(\tilde{R} + \pd{\eta}{t}\right)|h|^2\right)\theta_re^{-\eta}\,d\mu_{\gt}\\
&\leq\int_M\bigg(\frac{2}{t^{1+\sigma}}\langle \Lc(h) -2\delta^*_{\gt}B, h\rangle +\frac{N}{t}|\nabt h|^2\\
&\phantom{\leq\int_M\bigg(}-\left(\frac{1+\sigma}{t^{2+\sigma}}\right)(1-Nt^{\sigma})|h|^2
-\frac{1}{t^{1+\sigma}}\pd{\eta}{t}|h|^2\bigg)\theta_re^{-\eta}\,d\mu_{\gt},
\end{split}
\end{align*}
which, with \eqref{eq:ppart}, implies that
\begin{align}\label{eq:hrev}
\begin{split}
\frac{d}{dt}\left(\frac{\Hc_r(t)}{t^{1+\sigma}}\right) &\leq - \frac{1+\sigma}{t^{2+\sigma}}\left(1 - N_1t^{\sigma}\right)\Hc_r(t)
       - \frac{\alpha_0}{t^{1+\sigma}}\left(1-N_1t^{\sigma}\right)\Kc_r(t)\\
	&\phantom{\leq}+ \frac{N_1}{t^{1+\sigma}}\Bc_r(t) +\frac{1}{t^{1+\sigma}}\int_M\left(N_1|\nabt \eta|^2 - \pd{\eta}{t}\right)|h|^2\theta_r e^{-\eta}\,d\mu_{\gt}\\
	&\phantom{\leq}+ \frac{N_1}{t^{1+\sigma}}\int_{U_r}\!\!\!\theta_r^{-1}|\nabt\theta_r|^2|h|^2 e^{-\eta}\,d\mu_{\gt},
\end{split}
 \end{align}
for some $N_1 = N_1(\sigma, n, K, T^{*}) > 0$ on $[0, \tau]$.

We now specify $\eta$. Since $\bar{g}\leq N\gt(t)$, Lemma \ref{lem:cutoffgrowth} implies that there is 
$T^{\prime} = T^{\prime}(\sigma, n, K, T^*) > 0$ such that for $\tau \in (0, \min\{T, T^{\prime}\}]$, 
the function $\eta=\eta_{\tau}$ satisfies
\[
   N_1|\nabt\eta|^2 -\pd{\eta}{t}\leq 0, \quad \eta(x, t) \geq \bar{r}^2(x),
\]
on $M\times[0, \tau]$. We will henceforth define $\tau \dfn \min\{T^{\prime}, T\}$ and $\eta \dfn \eta_{\tau}$.
With this choice, the penultimate integral in \eqref{eq:hrev}
is nonpositive and $\eta(x, t) \geq r^2$ on $U_r \times [0, \tau]$. 

For the last integral in \eqref{eq:hrev}, note that, since the Ricci curvatures of $\bar{g} = \gt(T)$ have a lower bound of the form $-CKT^{\sigma-1} = NT^{-1}$, the Bishop-Gromov inequality
and the uniform equivalence of the metrics $\gt(t)$ and $\bar{g}$ imply an estimate of the form
\begin{equation}\label{eq:volest}
      \operatorname{vol}_{\gt(t)}(B_{\bar{g}}(x_0, r)) \leq N\operatorname{vol}_{\bar{g}}(B_{\bar{g}}(x_0, r)) \leq Ne^{\bar{V}r}
\end{equation}
for all $r$ for some $\bar{V} = \bar{V}(\sigma, n, K, T)$.  Also, by our choice of $\theta_r$ and the uniform equivalence of $\gt(t)$ and $\bar{g}$, 
\[
|\nabt\theta_r|^2 \leq N |\delb \theta_r|^2_{\bar{g}} \leq N\theta_r/r^2
\]
on $M\times[0, \tau]$. Using that $U_r \subset B_{\bar{g}}(x_0, 3r)$, and that (as observed in Lemma \ref{lem:hbnormest}) we have $|h|^2 \leq Nt^{2\sigma}$ on $M\times [0, T]$,
we then obtain the estimate
\[
\frac{1}{t^{1+\sigma}}\int_{U_r}\theta_r^{-1}|\nabt\theta_r|^2|h|^2 e^{-\eta}\,d\mu_{\gt}
  \leq \frac{N}{t^{1-\sigma}r^2}e^{-r^2+\bar{V}r}
\]
for $t\in (0,\tau]$.

Incorporating these observations into \eqref{eq:hrev} and recalling \eqref{eq:brev}, we arrive at the system
of inequalities
\begin{align*}
 \begin{split}
%\label{eq:bhcev}
\frac{d}{dt}\left(\frac{\Bc_{r}(t)}{t^{\sigma}}\right) &\leq -\frac{\sigma}{2t^{1+\sigma}}(1 - N_2t^{\sigma})\Bc_r(t)
    + \frac{N_2}{t^{2-\sigma}}\Hc_r(t) + \frac{N_2}{t}\Kc_r(t)\\
\frac{d}{dt}\left(\frac{\Hc_r(t)}{t^{1+\sigma}}\right)
 &\leq \frac{N_2}{t^{1+\sigma}}\Bc_r(t) - \frac{1+\sigma}{t^{2+\sigma}}\left(1 - N_2t^{\sigma}\right)\Hc_r(t)
       - \frac{\alpha_0}{t^{1+\sigma}}\left(1-N_2t^{\sigma}\right)\Kc_r(t)\\
&\phantom{\leq}
 +  \frac{N_2}{t^{1-\sigma}}e^{-r^2+\bar{V}r},
\end{split}
\end{align*}
valid for all $t \leq \tau$ and $r \geq 1$, for some fixed $N_2 = N_2(\sigma, n, K, T^*) > 0$.
 
 Choosing $a = a(\sigma, n, K, T^*)$ such that $a\sigma/4 >  N_2$, we at last obtain that, for all $r \geq 1$,
 the quantity
\[
    \Ec_r(t) \dfn a\frac{\Bc_r(t)}{t^\sigma} + \frac{\Hc_r(t)}{t^{1+\sigma}} 
\]
satisfies
\begin{align}\label{eq:erev}
\begin{split}
  \frac{d}{dt}\Ec_r(t) &\leq -\frac{a\sigma}{4t^{1+\sigma}}(1 - N_3t^{\sigma})\Bc_r(t) 
- \frac{(1+\sigma)}{t^{2+\sigma}}(1 - N_3t^{\sigma})\Hc_r(t)\\
  &\phantom{\leq} - \frac{\alpha_0}{t^{1+\sigma}}(1 - N_3t^{\sigma})\Kc_r(t) + \frac{N_3}{t^{1-\sigma}}e^{-r^2+ \bar{V}r}      
\end{split}
\end{align}
on $(0, \tau]$ for some $N_3 = N_3(\sigma, n, K, T^*)$.  

From this we see that if we choose $T^{\prime\prime} = \min\{ N_3^{-1/\sigma}, \tau\}$,
the coefficients of the first three terms
in \eqref{eq:erev} will be nonpositive for all $t\in (0, T^{\prime\prime}]$. That is, we have
\begin{align*}
\begin{split}
 \frac{d}{dt}\Ec_r(t) &\leq \frac{N_3}{t^{1-\sigma}}e^{-r^2+ \bar{V}r} 
\end{split}
\end{align*}
for all $r \geq 1$ on $(0, T^{\prime\prime}]$.  Fixing $0 < t_0 <  t_1 \leq T^{\prime\prime}$ and $r \geq 1$ and integrating the last inequality over $[t_0, t_1]$ yields
\[
    \Ec_r(t_1) - \Ec_r(t_0) \leq N(t_1^{\sigma} - t_0^{\sigma})e^{-r^2 + \bar{V}r}.
\]
Since $\theta_r$ has compact support, for any $b > 0$, we have
\[
    \sup_{(\operatorname{supp} \theta_r)\times (0, T]}\frac{1}{t^b}(|h| + |B|) < \infty.
\]
It follows that $\lim_{t_0\searrow 0} \Ec_r(t_0) = 0$ and
\[
 \Ec_r(t_1) \leq Ne^{-r^2 + \bar{V}r}
\]
for all $t_1 \in (0, T^{\prime\prime}]$ and all $r \geq 1$.
Letting $r$ tend to infinity, we conclude that $h(t_1)\equiv 0$ on $M$, and, since $t_1$ was arbitrary, that $h \equiv 0$ on all of $M\times [0, T^{\prime\prime}]$. Now, by definition, 
$T^{\prime\prime} = \min\{N_3^{-1/\sigma}, T^{\prime}, T\}$ where $N_3$ and $T^{\prime}$ depend only on the fixed parameters
$\sigma$, $n$, $K$, and $T^*$. Thus if $T^{\prime\prime} < T$, we can iterate this argument to conclude that $h\equiv 0$ on all of $M\times [0, T]$. 
\end{proof}

\subsection{A reformulation}\label{ssec:pdeode} The argument above can be recast in terms of
the general formulation of  Section 4 of \cite{KotschwarRFUniqueness}. For example, using the notation of that section, we can define the tensors
\[
  X \dfn  t^{-(1+\sigma)/2}h, \quad Y \dfn  at^{-\sigma/2}B
\]
and
\[
  U_{ij}^k \dfn t^{-(1+\sigma)/2}((g^{kp} - \gt^{kp})\nabt_{p}h_{ij} - \delta_i^kB_{j} - \delta_j^kB_i),
\]
on $M\times (0, T]$ where, as in the proof in the previous section, $a > 0$ is some parameter we may choose to be arbitrarily large.
From \eqref{eq:hnormest} and \eqref{eq:bnormest}, it is not hard to see that, under the hypotheses of Theorem \ref{thm:uniqueness}, $X$, $Y$, and $U$ satisfy a system of the form
\begin{align*}
\begin{split}
  \left\langle\pd{X}{t} - \widetilde{\Delta}X - \operatorname{div}_{\gt} U, X \right\rangle &\leq -\frac{1+\sigma}{2t}(1 - Nt^{\sigma})|X|^2 + \frac{N}{a^2t}|Y|^2 
  +  Nt^{\sigma}|\nabt X|^2,
\end{split}\\
\begin{split}
 \left\langle\pd{Y}{t}, Y\right\rangle & \leq  \frac{Na^2}{t^{1-2\sigma}}|X|^2 -\frac{\sigma}{4t}(1-Nt^{\sigma})|Y|^2  + Na^2t^{\sigma}|\nabt X|^2,
  \end{split}\\
  |U| &\leq Nt^{\sigma}|\nabt X| + \frac{ N}{at^{1/2}}|Y|,
\end{align*}
for some $N = N(\sigma, n, K, T^*)$. Here $(\operatorname{div}_{\gt} U)_{ij} \dfn \nabt_kU^k_{ij}$. 
The structure of the above system does not quite match the hypotheses of Theorem 13 of \cite{KotschwarRFUniqueness} as it is stated,
but the proof given there can still be made to apply with some minor adaptations; this is essentially the proof given in the previous section.
 
Note that the ``parabolic'' component $X$ of the above system
is of order zero in the difference of the metrics while the ``ordinary-differential'' component $Y$ is of order one. 
This is the reverse of the situation in \cite{KotschwarRFUniqueness} and \cite{KotschwarKCFUniqueness}, where
the parabolic components are higher-order, constructed out of the differences of the curvature tensors and their derivatives,
and the ordinary-differential components are lower-order, constructed out of the differences of the metrics and their connections.

\begin{acknowledgement*} The author thanks Ben Chow, Lei Ni, Miles Simon, and Peter Topping for their comments and for sharing some of their intuition with him.
\end{acknowledgement*}

\end{document}